\newtheorem{thm}{Theorem}[section]
\newtheorem{lemma}[thm]{Lemma}
\newtheorem{prop}[thm]{Proposition}
\newtheorem{defn}[thm]{Definition}
\newtheorem{rem}[thm]{Remark}
\renewcommand{\proofname}{Proof}
\def\Im{\operatorname{Im}}
\def\max{\operatorname{max}}
\def\length{\operatorname{length}}
\def\c1{\operatorname{c_1}}
\def\c2{\operatorname{c_2}}
\def\Cliff{\operatorname{Cliff}}
\def\gon{\operatorname{gon}}
\def\gengon{\operatorname{gengon}}
\def\mingon{\operatorname{mingon}}
\def\PP{{\mathbb P}}
\def\O{{\mathcal O}}
\def\I{{\mathcal J}}
\def\E{{\mathcal E}}
\def\J{{\mathfrak J}}
\def\*{\otimes}
\def\eqv{\equiv}
\def\sub{\subseteq}
\def\+{\oplus}                   
\def\*{\otimes}                  
\def\hpil{\longrightarrow}       
\def\Pic{\operatorname{Pic}}
\def\Supp{\operatorname{Supp}}
\def\Bs{\operatorname{Bs}}
\begin{document}

\title[Projective normality and the ideal of an Enriques surface.]{Projective normality and the generation of the ideal of an Enriques surface}

\author[A.L. Knutsen and A.F. Lopez]{Andreas Leopold Knutsen and Angelo Felice Lopez*}

\address{\hskip -.43cm Andreas Leopold Knutsen, Department of Mathematics, University of Bergen, Postboks 7800,
5020 Bergen, Norway. e-mail {\tt andreas.knutsen@math.uib.no}}

\address{\hskip -.43cm Angelo Felice Lopez, Dipartimento di Matematica e Fisica, Universit\`a di Roma 
Tre, Largo San Leonardo Murialdo 1, 00146, Roma, Italy. e-mail {\tt lopez@mat.uniroma3.it}}

\thanks{* Research partially supported by the MIUR national project ``Geometria delle variet\`a algebriche" PRIN 2010-2011.}

\thanks{{\it 2000 Mathematics Subject Classification} : Primary 14J28. Secondary 14H51, 14C20.}

\begin{abstract}
We give necessary and sufficient criteria for a smooth Enriques surface $S \subset \PP^r$ to be scheme-theoretically an intersection of quadrics.
Moreover we prove in many cases that, when $S$ contains plane cubic curves, the intersection of the quadrics containing $S$ is the union of $S$ and the
$2$-planes spanned by the plane cubic curves. We also give a new (very quick) proof of the projective normality of $S$ if $\deg S \geq  12$.
\end{abstract}

\maketitle

\section{Introduction}
\label{intro}

Even though it is a very basic question, it is in general difficult, given a projective variety $X \subset \PP^r$, to be able to tell about its equations, as they depend in a nontrivial way on the geometry and often on the moduli of $X$. An emblematic example of this is the case of curves, when the equations 
are strictly related to its Clifford index and the investigation of this problem has led to very important results and conjectures, such as Green's conjecture \cite{gr} and Voisin's theorem \cite{v1, v2}.

In the present paper we deal with the problem of finding the degrees of the equations of Enriques surfaces $S \subset \PP^r$. As a matter of fact it is not difficult to see that one can give an answer as soon as one knows it for a general hyperplane section $C_P = S \cap H$ passing through a fixed point $P \in \PP^r$. Now for curves $C$ important results, proved by Green and Lazarsfeld \cite[Thm. 1]{gl},  \cite[Prop. 2.4.2]{la} (we use also \cite[Thm. 1.3]{ls}) come to help: if $\deg C \geq 2g(C) +1- 2h^1(\O_C(1)) - \Cliff C$ then $C$ is projectively normal and if $\deg C$ is at least one more, then $C$ is scheme-theoretically cut out by quadrics unless it has a trisecant line. 
It is therefore of crucial importance to know the Clifford index of curves on Enriques surfaces. Now in \cite[Thm. 1.1]{kl3} we proved that $\Cliff C = \gon(C) - 2$, where $\gon(C)$ is the gonality of $C$. A careful study of $\gon(C_P)$ leads us then to the main result of this paper (for the function $\phi(L)$ see Def. \ref{phi}):

\begin{thm}   
\label{ideale}
Let $S \subset \PP^r$ be a smooth linearly normal nondegenerate Enriques surface and let $L = \O_S(1)$. Then
\begin{itemize}
\item[(i)] $S$ is projectively normal if $L^2 \geq 12$.
\item[(ii)] $S$ is scheme-theoretically cut out by quadrics if and only if $\phi(L) \geq 4$;
\item[(iii)] If $L^2 \geq 18$ and $\phi(L) = 3$ and $L$ is not of special type (see Def. \ref{18sp}), then the intersection of the quadrics containing $S$ is the union of $S$ and the
$2$-planes spanned by cubic curves $E \subset S$ such that $E^2=0$, $E.L=3$.
\end{itemize}
\end{thm}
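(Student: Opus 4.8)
\emph{The plan} is to reduce the statement for $S$ to one about a general curve section through a point and then apply the Green--Lazarsfeld type results quoted in the introduction. Write $W$ for the intersection of all quadrics containing $S$, and set $Y=S\cup\bigcup_E\Pi_E$, where $E$ runs over the divisors on $S$ with $E^2=0$, $E.L=3$ and $\Pi_E:=\langle E\rangle\subseteq\PP^r$. Each such $E$ is effective (Riemann--Roch, since $(K_S-E).L<0$ and $L$ is ample), and $E.K_S=0$ because $2K_S\sim 0$, so $p_a(E)=1$; hence $L|_E$ has degree $3$ and $h^0(L|_E)=3$, so $\Pi_E\cong\PP^2$ and $E$ maps onto a plane cubic of $\Pi_E$. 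The first step is to show $Y\subseteq W$: a quadric $Q\supseteq S$ meets $\Pi_E$ in a conic containing this plane cubic, and since a plane conic cannot contain a plane cubic this ``conic'' must be all of $\Pi_E$, i.e. $Q\supseteq\Pi_E$ for every $E$. Thus every quadric through $S$ contains $Y$, and $Y\subseteq W$ (note $Y\ne S$, consistently with (ii) since $\phi(L)=3<4$).

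Next I would pass to a curve section. Fix $P\in\PP^r\setminus S$ and a general hyperplane $H$ through $P$; then $C=C_P:=S\cap H$ is a smooth irreducible curve in $|L|$ of genus $g=\frac12 L^2+1$, with $h^1(\O_C(1))=0$ because $h^1(\O_S(1))=0$ (Kodaira vanishing: $L-K_S$ is ample) and $h^2(\O_S)=0$. Linear normality of $S$ gives $H^1(\I_S(1))=0$, so $H^0(\I_S(2))\to H^0(\I_{C/H}(2))$ is surjective, and therefore, with $Q'$ ranging over the quadrics of $H$ through $C$,
\[
W\cap H=\bigcap_{Q\supseteq S}(Q\cap H)=\bigcap_{Q'\supseteq C}Q'=:W_C.
\]
Now $C$ carries the $g^1_6$ obtained by restricting the elliptic pencil $|2E|$ (of degree $2E.C=2E.L=6$), so $\gon(C)\le 6$; the hypotheses $\phi(L)=3$, $L^2\ge 18$, $L$ not of special type, together with the analysis of the gonality of curves on Enriques surfaces, should give the reverse inequality, so $\gon(C)=6$ and, by \cite[Thm.~1.1]{kl3}, $\Cliff C=\gon(C)-2=4$. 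Hence $\deg C=L^2=2g-2$ is exactly one more than the Green--Lazarsfeld projective normality bound $2g+1-2h^1(\O_C(1))-\Cliff C=2g-3$; so by \cite[Thm.~1]{gl}, \cite[Prop.~2.4.2]{la} and \cite[Thm.~1.3]{ls}, $C$ is scheme-theoretically cut out by quadrics in $H$ unless it has a trisecant line, and in all cases $W_C$ is the union of $C$ with its trisecant lines.

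Finally I would identify the trisecant lines of $C$ and conclude. For each $E$, the line $\Pi_E\cap H$ meets $C=S\cap H$ along $E\cap H$, a scheme of length $E.L=3$, so it is a trisecant of $C$ and $\Pi_E\cap H\subseteq W_C$. Conversely, a trisecant line $\ell$ of $C$ lies in $H$, so $\ell\cap S=\ell\cap C$ has length $\ge 3$; the crux is to show that, when $\phi(L)=3$ and $L$ is not of special type, such an $\ell$ must be contained in $S$ or in some $\Pi_E$. This is a case analysis of which $(-2)$-curve configurations and which line bundles on $S$ could produce a trisecant line not lying in one of the planes $\Pi_E$, and it is precisely what the exclusion of special type is meant to rule out; I expect this, together with the lower bound $\gon(C)\ge 6$, to be the main obstacle. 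Granting it, $W_C=Y\cap H$ for every $P\notin S$ and general $H\ni P$, so any $p\in W\setminus S$ lies in $W\cap H=Y\cap H$ and hence in some $\Pi_E$; combined with $Y\subseteq W$ this gives $W=Y$, which is (iii). A secondary technical point is that the reduction in the middle paragraph must be applied uniformly in $P$; this is handled either by the constancy of $\gon(C_P)$ on the smooth members of $|L|$ under these hypotheses, or by checking that the hyperplanes $H$ for which $C_P$ fails to be smooth of gonality $6$ (or has an exotic trisecant) form a locus of codimension $\ge 2$ in the dual projective space.
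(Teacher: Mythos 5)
Your framework is the paper's own: the inclusion of the planes spanned by the plane cubics into every quadric through $S$, the passage to a general hyperplane section $C_P$ through $P$, the lifting of quadrics through $C_P$ to quadrics through $S$ via linear normality, and the appeal to \cite{gl}, \cite{la}, \cite{ls} once $\Cliff(C_P)=\gon(C_P)-2\ge 4$. But the two points you explicitly defer are not loose ends --- they are the entire content of the proof. The classification of trisecant lines (none if $\phi(L)\ge 4$; all contained in planes of cubics if $\phi(L)=3$) is at least available as a citation, namely \cite[Prop. 2.1]{glm2}. The genuinely hard point is the lower bound $\gon(C_P)\ge 6$ for the \emph{general member of the codimension-one subsystem} $|V_P|\subset|L|$ of hyperplane sections through $P$: by Proposition \ref{vargon} the gonality of smooth curves in $|L|$ can drop by $2$ below the generic value, so a priori the locus of curves of gonality $4$ or $5$ could be a divisor in $|L|$ containing $|V_P|$, and your closing remark that this locus has codimension $\ge 2$ is precisely the statement requiring proof, not a routine check. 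The paper establishes it by combining Proposition \ref{more} (gonality drop forces a divisor $D_0$ with $D_0^2\in\{2,4\}$ and small $D_0.L$, most cases being excluded by the Hodge index theorem) with, in the surviving cases $(L^2,\phi(L),k_0)=(18,4,5)$, $(18,3,5)$, $(16,4,4)$, $(16,4,5)$, a delicate dimension count on an incidence variety $\J\subset|V_P|\times X$ of pairs (curve, zero-cycle cut out by a suitable decomposition $L\sim N+N'$ arising from the bundle $\E(C_P,A)$), showing $\dim\Im\pi_1\le 7<8=\dim|V_P|$; the hypotheses $P\notin S$ (for (iii)) and $L$ not of special type enter exactly here. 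None of this appears in your proposal.

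Two further omissions: you do not address (i) at all (the paper dispatches it in a few lines, since $\gon(C_\eta)\ge 5$ by \cite[Cor. 1.5]{kl1} gives $\Cliff(C_\eta)\ge 3$ and then \cite[Thm. 1]{gl} applies), and you do not prove the ``if'' direction of (ii), which requires running the same gonality analysis with $P\in S$ and $C_P$ general through $P$, where the target is again $\gon(C_P)\ge 6$ but with a slightly different use of the base point. The ``only if'' direction of (ii) and the inclusion $Y\subseteq W$ in your first paragraph are correct and match the paper.
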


We recall that for $L$ to be very ample we need $L^2 \geq 10$ and $\phi(L) \geq 3$. Now (ii) and (iii) improve \cite[Thm. 1.3]{glm2}, while (i) gives a very quick new proof of \cite[Thm. 1.1]{glm1} (except for the case $L^2 = 10$). Note that the proof of (i) consists only of the first five lines of Section \ref{sec:genid}, which do not depend on Section \ref{sec:preliminari}. Also in case (iii) we have some partial results when $L^2 = 16$ (see Remark \ref{rm}).

We give some preliminary results in Section \ref{sec:preliminari}, among wich a generalization of \cite[Cor. 1.6]{kl1}. Then Section \ref{sec:genid} is devoted to the proof of Theorem \ref{ideale}. 

\section{Preliminary results}
\label{sec:preliminari}

\begin{defn}
We denote by $\sim$ (resp. $\eqv$) the linear (resp. numerical) equivalence of divisors or line bundles on a smooth surface. If $V \subseteq H^0(L)$ is a linear system, we denote its {\bf base scheme} by $\Bs |V|$. A {\bf nodal} curve on an Enriques surface $S$ is a smooth rational curve contained in $S$. A {\bf nodal cycle} is a divisor $R>0$ such that, for any $0 < R' \leq R$ we have $(R')^2 \leq -2$.
\end{defn} 
Now recall from \cite{cd} the following
\begin{defn}
\label{phi}
Let $L$ be a line bundle on an Enriques surface $S$ such that $L^2 > 0$. Set
\[ \phi(L) = \inf \{|F.L| \; : \; F \in \Pic S, F^2 = 0, F \not\eqv 0\}. \] 
\end{defn} 

In \cite{kl1} we proved the following result about the variation of the gonality in linear systems on Enriques surfaces

\begin{prop}  \cite[Cor. 1.6]{kl1}
\label{vargon}
Let $|L|$ be a base-component free complete linear system on an Enriques surface such that $L^2
> 0$. Let $\gengon |L|$ denote the gonality of a general smooth curve in $|L|$ and $\mingon |L|$ 
denote the minimal gonality of a smooth curve in $|L|$. Then
\[\gengon |L|-2 \leq \mingon |L| \leq \gengon |L|. \] 

\noindent Moreover if equality holds on the left, then $\phi(L) \geq \lceil \sqrt{\frac{L^2}{2}}
\rceil$.
\end{prop}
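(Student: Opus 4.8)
The plan is as follows. The inequality $\mingon|L|\le\gengon|L|$ is immediate, since, $|L|$ being base-component free with $L^2>0$, a general member of $|L|$ is a smooth curve and is thus one of the curves over which $\mingon|L|$ is taken (equivalently: gonality cannot increase under specialization of smooth curves). The whole content is the reverse estimate $\gengon|L|\le\mingon|L|+2$, which I would prove by lifting the gonality pencil of a minimal-gonality curve to $S$ and then transporting it to the general member of $|L|$.

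Fix a smooth $C\in|L|$ with $\gon(C)=\mingon|L|=:k$. The essential input is the structure theorem for minimal pencils on curves on Enriques surfaces --- the Knutsen--Lopez analogue of the Green--Lazarsfeld and Donagi--Morrison lifting results on $K3$ surfaces: either $C$ belongs to a finite list of exceptional cases, to be handled directly by adjunction and the Hodge index theorem on $S$, or there is a nef divisor $D$ on $S$ with $D^2\le 2$, with $|\O_S(D)|$ having no fixed component, with $h^0(\O_S(D))\ge 2$ and $C-D>0$, such that the moving part of the image of $H^0(\O_S(D))\to H^0(\O_C(D))$ is a $g^1_k$ on $C$. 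Writing $\Delta\subset S$ for the base scheme of $|\O_S(D)|$, one then has $k=D.C-\length(\Delta\cap C)$; and, since $|\O_S(D)|$ has no fixed component, $\Delta$ is contained in the scheme-theoretic intersection of two distinct members, so $\length(\Delta\cap C)\le\length(\Delta)\le D^2\le 2$.

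Next I transport $D$ to the general curve. The classes of $D$ and of $C-D$ on $S$ depend only on $L$, not on the chosen $C\in|L|$; and $h^0(\O_S(D-C))=0$ since $C-D>0$. Hence for \emph{every} smooth $C'\in|L|$ the exact sequence $0\to\O_S(D-C')\to\O_S(D)\to\O_{C'}(D)\to 0$ gives an injection $H^0(\O_S(D))\hookrightarrow H^0(\O_{C'}(D))$, so $C'$ carries a linear series of degree $D.C'=D.C$ and positive dimension, whence $\gon(C')\le D.C$. Taking $C'$ general, $\gengon|L|\le D.C=k+\length(\Delta\cap C)\le k+2=\mingon|L|+2$, the exceptional curves being dealt with separately.

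Finally, assume $\mingon|L|=\gengon|L|-2$ and that we are not in an exceptional case (those being checked by hand). Then all the inequalities above are equalities: $\gengon|L|\le D.C$ and $D.C-\length(\Delta\cap C)=k=\gengon|L|-2$ force $\length(\Delta\cap C)=2$, hence $\length(\Delta)\ge 2$, hence (with $\length(\Delta)\le D^2\le 2$) $D^2=2$, and $D.L=D.C=\gengon|L|$. The Hodge index theorem then gives $2L^2=D^2L^2\le(D.L)^2=\gengon|L|^2$; and $\gengon|L|\le 2\phi(L)$ always, because the elliptic pencil $|2F|$ attached to an $F$ with $F^2=0$ and $F.L=\phi(L)$ (replacing $F$ by $F+K_S$ if needed so that $F$ is effective) cuts a $g^1_{2\phi(L)}$ on every smooth member of $|L|$. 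Hence $2L^2\le 4\phi(L)^2$, i.e.\ $\phi(L)\ge\lceil\sqrt{L^2/2}\rceil$ since $\phi(L)\in\NN$. The main obstacle is the lifting step and the attendant bound $\length(\Delta\cap C)\le 2$: this is exactly where the structure theory of low-degree pencils on curves on Enriques surfaces is needed, and where the constant $2$ --- a manifestation of the $2$-torsion of $K_S$, the reason no such drop occurs on $K3$ surfaces --- enters; the (finite) exceptional-curve analysis must be performed with care.
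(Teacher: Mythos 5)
First, a point of comparison: the paper does not prove this Proposition at all --- it is quoted verbatim from \cite[Cor.~1.6]{kl1} --- so the relevant benchmark is the proof of that corollary and of its refinement, Proposition~\ref{more}, which the paper does prove and which exhibits the actual mechanism. Measured against that, your outline has the right global shape: the inequality $\mingon|L|\le\gengon|L|$ is indeed trivial, the transport step via $h^0(D-L)=0$ and restriction to a general member is sound, $\gengon|L|\le 2\phi(L)$ is correct, and the concluding Hodge-index computation $2L^2\le (D.L)^2\le 4\phi(L)^2$ is exactly the right way to extract $\phi(L)\ge\lceil\sqrt{L^2/2}\rceil$ once one knows $D^2=2$ and $D.L=\gengon|L|$.

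The genuine gap is the ``structure theorem'' you invoke, which is where all the content lives and which, as you state it, is not what is available and is essentially a restatement of the result to be proved. What \cite[Prop.~3.1]{kl1} (via the Lazarsfeld--Mukai-type bundle $\E(C,A)$) actually yields from a minimal pencil $A$ on $C$ is an effective decomposition $L\sim N+N'$ with $h^0(N')\ge 2$ and $N.N'\le k$, together with partial base-locus information; one must then minimize over all such decompositions (the set $X_L$ in the proof of Proposition~\ref{more}) and run a case analysis to force $D_0^2\in\{2,4\}$. Your version --- a nef $D$ with $D^2\le 2$, degree drop equal to $\length(\Delta\cap C)\le D^2\le 2$, plus an unspecified finite list of exceptions --- omits precisely the cases that make the statement delicate: (a) the case $D_0^2=4$, $\phi(D_0)=2$, where $|D_0|$ is a base-point-free \emph{net}, the gonality bound $\mu(L)=D_0.L-2$ comes from projecting a plane model rather than from base points of a pencil, and which genuinely occurs when $\mingon|L|=\gengon|L|-2$ (Proposition~\ref{more}(ii)); there the Hodge index gives only $4L^2\le(k+2)^2$, and recovering $\phi(L)\ge\lceil\sqrt{L^2/2}\rceil$ requires a separate small computation your argument does not contain; and (b) the contribution $\lfloor L^2/4\rfloor+2$ to $\gengon|L|$, which is not cut out by any divisor on $S$ at all. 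As written, the inequality $\gengon|L|-2\le\mingon|L|$ and the bound on $\phi(L)$ are therefore assumed inside the hypotheses of your lifting statement rather than proved; repairing this means correctly quoting \cite[Prop.~3.1]{kl1} and redoing the minimization and case analysis, which is exactly what the proof of Proposition~\ref{more} carries out.
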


We will need the ensuing generalization 

\begin{prop} 
\label{more}
Let $|L|$ be a base-component free complete linear system on an Enriques surface such that $L^2 > 0$. Let $k:= \gengon |L|$ and assume that there is a smooth curve $C_0 \in |L|$ with $k_0 := \gon(C_0) \geq 2$.

If $k_0 = k - 2$ then there exists a line bundle $D_0 > 0$ such that either 
\begin{itemize}
\item[(i)] $D_0^2 = 2$, $D_0.L = k = 2 \phi(L)$ and $L^2 \geq 8\phi(L)-8$; or
\item[(ii)] $D_0^2 = 4$, $\phi(D_0) = 2$, $k = \mu(L) = D_0.L - 2$ and $L^2 \geq \max\{4k-8, 2k+4\}$. 
\end{itemize}

\noindent If $L^2 \geq 4k - 4$ and $k_0 = k - 1$ then there exists a line bundle $D_0 > 0$ such that either 
\begin{itemize}
\item[(iii)] $D_0^2 = 2$, $k \leq 2 \phi(L) \leq D_0.L \leq k + 1$ and  $L^2 \geq 4D_0.L-8$; or
\item[(iv)] $D_0^2 = 4$, $\phi(D_0) = 2$, $k + 2 \leq \mu(L) + 2 \leq D_0.L \leq k + 3$ and $L^2 \geq \max\{4D_0.L-16, 2D_0.L\}$.
\end{itemize}
\end{prop}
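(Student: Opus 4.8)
The plan is to exploit the structure theory of linear systems on Enriques surfaces, together with Proposition \ref{vargon}, to produce the line bundle $D_0$ from a low-gonality pencil on the special curve $C_0$. The starting point is that $\gon(C_0) = k_0$ means $C_0$ carries a base-point-free $g^1_{k_0}$, say $|A_0|$ with $A_0$ a divisor on $C_0$ of degree $k_0$ and $h^0(A_0) = 2$. One first lifts this pencil to the surface: by the standard lifting results (Donagi--Morrison, Knutsen--Lopez), under numerical hypotheses of the type $L^2 \gg k_0$ the bundle $A_0$ is cut out on $C_0$ by a line bundle $D_0$ on $S$ with controlled invariants, i.e. $0 < D_0 \leq L$, $h^0(D_0) \geq 2$, and $D_0.(L - D_0)$ small; concretely $D_0.C_0 = D_0.L$ is close to $k_0 + D_0^2/2$ or to $2k_0$ depending on which of the two Clifford-index computations on $C_0$ realises the gonality. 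This is where the dichotomy $D_0^2 = 2$ versus $D_0^2 = 4$ in (i)--(iv) originates: after replacing $D_0$ by a suitable effective numerically equivalent divisor and using that $\phi(L)$ governs the minimal $F.L$ with $F^2 = 0$, the only configurations that survive the inequality $k_0 = \gon(C_0) = \Cliff(C_0) + 2$ are an elliptic-pencil-type $D_0$ (leading to $D_0^2 = 0$ contributions controlled by $\phi$, assembled into $D_0^2 = 2$) or a genus-$2$-fibration-type $D_0$ with $D_0^2 = 4$ and $\phi(D_0) = 2$.

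The heart of the argument is to bootstrap from Proposition \ref{vargon}. Since $C_0$ has gonality $k_0 \leq k - 1 < k = \gengon|L|$, the inequality $\mingon|L| \leq \gon(C_0) < \gengon|L|$ forces $\mingon|L| \leq \gengon|L| - 1$. In the case $k_0 = k - 2$ we are exactly in the boundary case $\mingon|L| = \gengon|L| - 2$ of Proposition \ref{vargon}, so we immediately obtain $\phi(L) \geq \lceil \sqrt{L^2/2} \rceil$; this numerical constraint, fed back into the classification of the lifted $D_0$, is what pins down $D_0.L = 2\phi(L)$ and $k = 2\phi(L)$ in alternative (i), and what forces the bounds $L^2 \geq 8\phi(L) - 8$ and $L^2 \geq \max\{4k - 8, 2k+4\}$ in (i) and (ii) respectively (the latter via $\phi(D_0) = 2$ and the Hodge index inequality $(D_0.L)^2 \geq D_0^2 L^2 = 4L^2$, rearranged). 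For the case $k_0 = k - 1$ one runs the same analysis without the extra $\phi$-bound but using instead the hypothesis $L^2 \geq 4k - 4$ to guarantee the lifting theorem applies and to convert the a priori estimate on $D_0.L$ into the stated two-sided bounds $k \leq 2\phi(L) \leq D_0.L \leq k+1$ and $k+2 \leq \mu(L) + 2 \leq D_0.L \leq k+3$, where $\mu(L)$ is the relevant invariant (minimal $D.L$ over $D$ with $D^2 = 4$, $\phi(D) = 2$, $D < L$).

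In each case, after producing $D_0$ one must verify the claimed numerical inequalities, which is mostly bookkeeping with the Hodge index theorem, adjunction $2g(C_0) - 2 = L^2$, and the Clifford-index/gonality identity $\Cliff(C_0) = \gon(C_0) - 2$ from \cite[Thm. 1.1]{kl3}: writing $\Cliff(C_0) \leq \Cliff(D_0|_{C_0}) = D_0.L - 2h^0(D_0|_{C_0}) + 2$ and plugging in $h^0 \geq 2$ (with equality handled separately when $D_0^2 = 2$, i.e. when $D_0|_{C_0}$ computes a complete $g^1$) yields the upper bounds on $D_0.L$, while $\phi(L) \leq F.L$ applied to the elliptic pieces of $D_0$ yields the lower bounds. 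The main obstacle I anticipate is the lifting step: one needs the hypotheses $L^2 \geq 4k-4$ (resp. the $\phi$-bound from Proposition \ref{vargon} in the $k_0 = k-2$ case) to be genuinely strong enough to invoke a Donagi--Morrison-type lifting with no exceptional cases, and then to rule out spurious bundles $D_0$ with, say, $D_0^2 = 0$ or $D_0^2 \geq 6$ or with $\phi(D_0) = 1$ when $D_0^2 = 4$ — these exclusions require a delicate case analysis on the numerical type of $D_0$ (in the sense of the classification of nef divisors on Enriques surfaces via isotropic $10$-sequences, cf. \cite{cd}), and carving the outcome into exactly the four listed alternatives is the part that will take real work.
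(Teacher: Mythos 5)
Your starting point --- lifting the $g^1_{k_0}$ on $C_0$ to a decomposition of $L$ via the vector-bundle/Donagi--Morrison-type results of \cite{kl1} --- is indeed where the paper begins (it invokes \cite[Prop. 3.1]{kl1}(a) to get $L \sim N + N'$ with $N.N' \leq k_0$ and $h^0(N') \geq 2$). But the proposal stops short of the actual content of the proposition, and one of the mechanisms you do specify is wrong. You claim the lower bounds on $L^2$ (e.g. $L^2 \geq 8\phi(L)-8$ in (i), $L^2 \geq \max\{4k-8,2k+4\}$ in (ii)) come from ``the Hodge index inequality $(D_0.L)^2 \geq D_0^2 L^2$, rearranged.'' That inequality bounds $L^2$ from \emph{above} by $(D_0.L)^2/D_0^2$; it cannot produce a lower bound on $L^2$. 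The same goes for the $\phi$-bound $\phi(L) \geq \lceil\sqrt{L^2/2}\rceil$ from Proposition \ref{vargon}, which (while legitimately available when $k_0 = k-2$) again only bounds $L^2$ from above. In the paper the lower bounds come from an entirely different source: $(L-2D_0)^2 \geq 0$ together with $L.(L-2D_0) \geq 0$, and the latter requires knowing that $L - 2D_0$ is effective (or $L \equiv 2D_0$). Securing that effectivity is precisely why the paper introduces the auxiliary set $X_L$ of divisors $D$ with $h^0(D)\geq 2$, $D.(L-D)\leq k_0$, $(L-2D)^2\geq 0$ and $L-2D\geq 0$ or $L\equiv 2D$, and chooses $D_0 \in X_L$ minimizing $H.D_0$ for a fixed ample $H$. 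This minimization is also what excludes $D_0^2 \geq 6$: if $D_0^2\geq 6$ one subtracts an isotropic $F$ with $F.D_0=\phi(D_0)$ and checks $D_0 - F \in X_L$, contradicting minimality. None of this apparatus appears in your outline.

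Beyond that, the parts you explicitly defer --- ruling out $D_0^2 = 0$ and $\phi(D_0)=1$ when $D_0^2=4$, and pinning down the exact equalities $D_0.L = k = 2\phi(L)$ in (i) and $k = \mu(L) = D_0.L - 2$ in (ii) --- are the whole substance of the statement. In the paper these follow from a short chain of inequalities ($k - \varepsilon \leq 2\phi(L) - \varepsilon \leq D_0.L - \varepsilon \leq D_0^2 + k_0 - \varepsilon$, forcing $D_0^2 \geq \varepsilon$ and hence $D_0^2 \in \{2,4\}$), plus Hodge-index eliminations of the $\phi(D_0)=1$ configurations $D_0 \sim 2F_1 + F_2$. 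Acknowledging that this ``will take real work'' is honest, but it means the proposal as written does not constitute a proof: the construction that makes the bookkeeping possible (the minimal element of $X_L$) is missing, and the one numerical mechanism you commit to is inverted.
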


\begin{proof} Let $k_0 = k - \varepsilon$ with $\varepsilon = 1$ or $2$, so that $k \geq 3$ and then $\phi(L) \geq 2$. In particular $L$ is base-point free.
When  $\varepsilon = 2$ we have $k_0 \leq \lfloor \frac{L^2}{4} \rfloor$ by \cite[Lemma 5.1]{kl1}, so that $L^2 \geq
4k_0 \geq 8$. This also holds by hypothesis when  $\varepsilon = 1$.
By \cite[Rmk 4.9]{kl1} with $b = k_0 - 1$ we get $L^2 \geq 2k_0 - 2 + 2 \phi(L)$. Fix an ample divisor $H$ and let $A_0$ be a $g^1_{k_0}$ on $C_0$. By \cite[Prop. 3.1]{kl1}(a) we obtain an effective nontrivial decomposition $L \sim N + N'$ with $N.N' \leq k_0$ and $h^0(N') \geq 2$. Moreover we have $(N -
N')^2 = L^2 - 4N.N' \geq 0$ and $H.(N - N') \geq 0$, so that by  Riemann-Roch we have that either $N \eqv N'$ and $L \eqv 2N'$ or $L - 2N' = N
- N' \geq 0$. Consider the set
\begin{eqnarray*}
X_L & = & \{D \in \Pic(S) \ : \ h^0(D) \geq 2, D.(L - D) \leq k_0, \\ & & \ (L - 2D)^2
\geq 0 \ \mbox{and either} \ L - 2D \geq 0 \ \mbox{or} \ L \eqv 2D \}.
\end{eqnarray*}
Now $X_L \neq \emptyset$ since $N' \in X_L$, whence we can choose an element $D_0 \in X_L$ for which
$H.D_0$ is minimal. Note that $h^0(D_0) \geq 2$ implies that $D_0.L \geq 2\phi(L)$.  Set $G:= L - 2D_0$ so that we know that either $G \geq 0$ or $G \eqv 0$, whence $L.(L-2D_0) = L.G \geq 0$. Together with $(L - 2D_0)^2 \geq 0$ we get 
\begin{equation}
\label{zero}
L^2 \geq \max\{4D_0.L-4D_0^2, 2D_0.L\}.
\end{equation}
\noindent We first prove that we cannot have $D_0^2 \geq 6$. In fact if this is the case, pick 
any $F > 0$ such that $F^2 = 0$ and $F.D_0 = \phi(D_0)$. Then $(D_0 - F)^2 \geq 2$ and $D_0 - F > 0$ 
by \cite[Lemma 2.4]{kl1}, so that $h^2(D_0 - F) = 0$ whence $h^0(D_0 - F) \geq 2$ by Riemann-Roch.
Moreover
\[ (D_0 - F)(L - D_0 + F) = D_0.(L - D_0) - F.(L - 2D_0) \leq k_0 \]
since, by \cite[Lemma 2.1]{klvan}, we have that $F.(L - 2D_0) \geq 0$. For the same reason
\[ (L - 2(D_0 - F))^2 = (L - 2D_0)^2 + 4 F.(L - 2D_0) \geq 0. \]
Finally $L - 2(D_0 - F) = L - 2D_0 + 2F > 0$ both when $L - 2D_0 \geq 0$ and when $L \eqv 2D_0$. Now 
this contradicts the minimality of $D_0$. Therefore we have $D_0^2 \leq 4$.

\noindent We have
\begin{equation}
\label{uno}
k_0 = k - \varepsilon \leq 2\phi(L) - \varepsilon \leq D_0.L - \varepsilon = D_0^2 + D_0.(L - D_0) - \varepsilon \leq D_0^2 + k_0 - \varepsilon
\end{equation}
whence $D_0^2 \geq \varepsilon \geq 1$, so that $D_0^2 \geq 2$. 

\noindent Suppose $D_0^2 = 2$. If $\varepsilon = 2$ we have equality in \eqref{uno}, so that $D_0.L = k = 2\phi(L)$ and \eqref{zero} gives $L^2 \geq 8\phi(L)-8$, that is (i). If $\varepsilon = 1$ then \eqref{uno} gives  $k \leq 2 \phi(L) \leq D_0.L \leq k + 1$ and \eqref{zero} gives $L^2 \geq 4D_0.L-8$, that is (iii).

\noindent Now suppose $D_0^2 = 4$. We first exclude the case $\phi(D_0) = 1$. In fact, in the latter case, we can write $D_0
\sim 2F_1 +  F_2$ with $F_i > 0$, $F_i^2 = 0$, $i = 1, 2$, $F_1.F_2 = 1$ and we get
\begin{equation}
\label{due}
3 \phi(L) \leq D_0.L = D_0^2 + D_0.(L - D_0) \leq 4 + k_0 = 4 + k - \varepsilon \leq 4 + 2\phi(L) - \varepsilon
\end{equation}
whence $2 \leq \phi(L) \leq 4 - \varepsilon$. If $\varepsilon = 2$ then  $\phi(L) = 2$ and we have equality in \eqref{due}, so that $D_0.L=6$ and the Hodge index theorem applied to $D_0$ and $L$ then gives $L^2 \leq 8$, contradicting \eqref{zero}. If $\varepsilon = 1$ then  $\phi(L) \leq 3$ and if equality holds then we have equality in \eqref{due}, so that $D_0.L=9$ and $F_1.L = F_2.L = 3$. Now the Hodge index theorem applied to $F_1 + F_2$ and $L$ gives $L^2 \leq 18$, contradicting \eqref{zero}. Therefore $\phi(L)= 2$ and \eqref{due} gives $6 \leq D_0.L \leq 7$ and therefore $F_1.L = 2$, $2 \leq F_2.L \leq 3$. Now the Hodge index theorem applied to $F_1 + F_2$ and $L$ gives $L^2 \leq 8$ if $F_2.L = 2$ and $L^2 \leq 12$ if $F_2.L = 3$, both contradicting \eqref{zero}. 

\noindent Therefore we have proved that, when $D_0^2 = 4$, we have $\phi(D_0) = 2$. Since certainly $D_0 \not\eqv L$, we have, by  \cite[Lemma 5.1]{kl1},
\[ k \leq \mu(L) \leq D_0.L - 2 = 2 + D_0.(L - D_0) \leq 2 + k_0 = 2 + k - \varepsilon \]
so that, using \eqref{zero}, we deduce that either $\varepsilon = 2$ and $k = \mu(L) = D_0.L - 2$, $L^2 \geq \max\{4k-8, 2k+4\}$, that is (ii), or $\varepsilon = 1$ and $k + 2 \leq \mu(L) + 2 \leq D_0.L \leq k + 3$, $L^2 \geq \max\{4D_0.L-16, 2D_0.L\}$, that is (iv).
\end{proof}

For the sequel we will also need the following simple 
\begin{lemma} 
\label{18}
Let $|L|$ be a base-component free complete linear system on an Enriques surface $S$ such that $L^2 =18$ and $\phi(L)=3$. Then there exist $E>0, E_i > 0$, $1 \leq i \leq 3$, such that $E^2 = E_i^2 = 0$,  $E.E_i = E_i.E_j = 1$ for $i \neq j$ and either

\begin{itemize}
\item[(i)] $L \sim 3(E + E_1)$; or
\item[(ii)] $L \sim 2 E + E_1 + E_2 + E_3$. 
\end{itemize}
Moreover, in case (ii) we have that $H^1(E+E_1+K_S) = H^1(E+E_i+E_j) = 0$.
\end{lemma}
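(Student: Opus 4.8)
The plan is to use the classical structure theory of linear systems on Enriques surfaces, in the form of the reduction to so-called \emph{isotropic sequences}, together with the numerical constraints $L^2 = 18$ and $\phi(L) = 3$. First I would invoke the standard fact (going back to Cossec--Dolgachev \cite{cd}, and used throughout \cite{kl1}) that since $|L|$ is base-component free with $L^2 > 0$ and $\phi(L) \geq 2$, one can write $L$ as a non-negative integral combination of isotropic divisors $E, E_1, \dots$ with controlled intersection numbers. Concretely, picking $E > 0$ with $E^2 = 0$ and $E.L = \phi(L) = 3$, one builds an isotropic $10$-sequence (or a shorter decomposition) refining $E$; the combinatorics of $E.L = 3$, $L^2 = 18$ then forces $L$ to be numerically one of a very short list of types. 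I expect exactly the two numerical types $L \eqv 3(E+E_1)$ and $L \eqv 2E + E_1 + E_2 + E_3$ (with $E.E_i = E_i.E_j = 1$) to survive, since these are precisely the ways to write $18 = L^2$ with a degree-$3$ isotropic divisor present; all other candidate decompositions either violate $\phi(L) = 3$ (they would produce an $F$ with $F.L \leq 2$) or give the wrong self-intersection.

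Once the numerical type is pinned down, I would upgrade numerical equivalence to linear equivalence. On an Enriques surface $\Pic S = \Num S \oplus (\ZZ/2)$, so $L$ is linearly equivalent either to the ``named'' divisor or to that divisor plus $K_S$; absorbing $K_S$ into one of the $E_i$ (replacing $E_i$ by $E_i + K_S$, which is again a primitive isotropic effective divisor with the same intersection numbers, using $h^0(E_i) = h^0(E_i + K_S) = 1$ for a half-fiber, or choosing the representative in the genus-one pencil appropriately) lets me write $L \sim 3(E+E_1)$ or $L \sim 2E + E_1 + E_2 + E_3$ on the nose. This is the kind of routine bookkeeping that I would not spell out in full.

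The remaining assertion — the vanishings $H^1(E + E_1 + K_S) = H^1(E + E_i + E_j) = 0$ in case (ii) — is where the real (though still short) work lies, and is the step I expect to be the main obstacle. The divisor $M := E + E_1 + K_S$ (resp. $E + E_i + E_j$) satisfies $M^2 = 2$ (resp. $M^2 = 2$, using $E.E_i = E.E_j = E_i.E_j = 1$), and it is effective and non-zero. By Riemann--Roch $h^0(M) - h^1(M) = 2$, so $H^1(M) = 0$ is equivalent to $|M|$ having no base component and $h^0(M) = 2$, i.e.\ $M$ behaving like a genus-$2$ polarization or, more robustly, to $M$ being nef with $M^2 = 2$ (then $h^1(M) = 0$ by the classification of such divisors on Enriques surfaces, see e.g.\ \cite[§§2--3]{kl1}). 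Thus I would show $M$ is nef: if not, there is a nodal curve $\Gamma$ with $M.\Gamma < 0$, forcing $\Gamma^2 = -2$ and $M.\Gamma \in \{-1, -2\}$; I would rule this out by pairing $\Gamma$ with $L = 2E + E_1 + E_2 + E_3$ and with the individual $E_i$, using that each $E_i$ is nef (half-fibers are nef) and that $L$ is ample (or at least nef and big with $\phi(L) = 3$), to derive $\Gamma.E = \Gamma.E_i = 0$ for the relevant indices and hence $M.\Gamma = 0$, a contradiction. Since $E + E_1 + K_S \sim E + E_1$ up to $K_S$ and numerically $E + E_1 \eqv E + E_i \eqv E + E_j$, all three divisors are numerically equivalent of the same type, so a single nefness argument handles all cases simultaneously; the $K_S$ twist is harmless because nefness is a numerical condition and $h^1$ is controlled the same way (indeed $h^1(M+K_S) = h^1(M)$ by Serre duality on an Enriques surface, since $K_S$ is $2$-torsion). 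I would then conclude $h^1 = 0$ in all listed cases.
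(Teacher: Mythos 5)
The first half of your argument (reducing $L$ to the two numerical types and absorbing $K_S$) is in the same spirit as the paper's, though you leave the classification step as an expectation rather than a proof; the paper makes it precise by writing $L \sim 3E+F$ with $F>0$, $F^2=0$, $E.F=3$ via \cite[Lemma 2.4]{kl1} and then observing that $(E+F)^2=6$ forces $\phi(E+F)\in\{1,2\}$, which yields exactly cases (i) and (ii). That part of your sketch could be completed along those lines.

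The genuine gap is in the vanishing step, which you yourself identify as the real content. You propose to prove that $M=E+E_1$ (and $E+E_i+E_j$) is nef, resting on the assertion that ``each $E_i$ is nef (half-fibers are nef).'' But the $E_i$ produced by the decomposition are merely effective primitive isotropic divisors, not necessarily half-fibers: an effective $E_1$ with $E_1^2=0$ may contain a nodal curve $\Gamma$ with $E_1.\Gamma=-1$ (write $E_1\sim A+\Gamma$ with $A>0$, $A^2=0$, $A.\Gamma=1$), and since $L$ is only nef --- not ample --- one cannot exclude $L.\Gamma=0$. In such a configuration $(E+E_1).\Gamma=-1$, so $E+E_1$ is \emph{not} nef, yet the conclusion of the lemma still holds; note that the paper itself, later in Section 3, explicitly treats the situation where the $E_i$ fail to be nef. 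Your claimed equivalence between $H^1(M)=0$ and nefness is also false in the direction you would need: nef and big implies $h^1=0$ on an Enriques surface, but not conversely. What is actually needed, and what the paper proves, is the weaker inequality $(E+E_1).\Delta\geq -1$ for every $\Delta>0$ with $\Delta^2=-2$, which gives $H^1=0$ by the sharp vanishing criterion \cite[Cor. 2.5]{klvan}; the case $(E+E_1).\Delta\leq -2$ is excluded by writing $E_1 \sim A+k\Delta$ via \cite[Lemma 2.3]{kl1} and intersecting with a suitable $D\in\{E,E_2,E_3\}$ satisfying $D.\Delta\geq 1$, which contradicts $D.E_1=1$. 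As written, your nefness argument gets stuck precisely at the $(E+E_1).\Delta=-1$ configurations, which genuinely occur, so the key assertion of the lemma is not established by your proposal.
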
 
\begin{proof} 
Let $E>0$ be such that $E^2 = 0$ and $E.L = 3$. By \cite[Lemma 2.4]{kl1} we can write $L \sim 3E + F$ with $F>0, F^2 = 0$ and $E.F = 3$. If $\phi(E+F) = 1$ then $F \eqv 3E_1$ for some $E_1>0$ such that $E_1^2 = 0$ and we are in case (i). If $\phi(E+F) = 2$ then we can write $E+F = E_1 + E_2 + E_3$ for some $E_i > 0$ with $E_i^2 = 0$ and $E.E_i =E_i.E_j = 1$ for $i \neq j$ and we are in case (ii). To prove that  $H^1(E+E_1+K_S) = 0$ it is enough, by \cite[Cor. 2.5]{klvan}, to see that $(E+E_1).\Delta \geq -1$ for every $\Delta > 0$ such that $\Delta^2=-2$. Now if $(E+E_1).\Delta \leq -2$ then $k:= -E_1.\Delta \leq -2$, whence, by \cite[Lemma 2.3]{kl1} we can write $E_1 = A + k \Delta$ with $A > 0, A^2=0$. As $L.\Delta > 0$ we can find a divisor $D \in\{E, E_2, E_3\}$ such that $D.\Delta \geq 1$, giving the contradiction $1 = D.E_1 = D.A + kD.\Delta \geq 2$. Similarly we can prove that $H^1(E+E_i+E_j) = 0$.
\end{proof} 
\begin{defn} 
\label{18sp}
Let $|L|$ be a base-component free complete linear system on an Enriques surface $S$ such that $L^2 =18$ and $\phi(L)=3$. We say that $L$ is {\bf of special type} if $L \sim 2 E + E_1 + E_2 + E_3$ with either $E_2+E_3-E_1 > 0$ or $E_2+E_3+K_S-E_1 > 0$.
\end{defn} 

\begin{rem} 
\label{unnodal}
Note that line bundles $L$ of special type exist only on nodal Enriques surfaces (that is that contain a smooth rational curve). In particular they do not exist on the general Enriques surface.
\end{rem} 

\section{Proof of Theorem \ref{ideale}.}
\label{sec:genid}

\begin{proof}
Observe first that, being $L$ very ample, we have $\phi(L) \geq 3$. Let $C_{\eta}$ be a general hyperplane section of $S$ and set $k = \gon (C_{\eta})$. To see (i) it is enough to prove that $C_{\eta}$ is projectively normal and, as $H^1(L_{|C_{\eta}}) = 0$ and $\deg L_{|C_{\eta}} = 2 g(C_{\eta}) - 2$, by \cite[Thm. 1]{gl}, we just need to prove that $\Cliff(C_{\eta}) \geq 3$. Now $\gon(C_{\eta}) \geq 5$ by \cite[Cor. 1.5]{kl1} whence $\Cliff(C_{\eta}) \geq 3$ by \cite[Thm. 1.1]{kl3}. This concludes the proof of (i).

To see (ii) note that if $\phi(L) = 3$, then $S$ contains plane cubics, therefore it has trisecant lines and it cannot be scheme-theoretically cut out by quadrics. This proves the ``only if" part in (ii). 

Vice versa suppose now $L^2 \geq 18$. We will prove (ii) and (iii) together. Let $P \in \PP^r$ be a point, let $|V_P|$ be the linear system cut out on $S$ by hyperplanes passing through $P$ and let $C_P$ be a general element of $|V_P|$ . To prove (ii) we will suppose that $\phi(L) \geq 4$ and $P \in S$, while to prove (iii) we will suppose that $\phi(L) = 3$, $P\not\in S$ and $P$ does not belong to the union of the 2-planes spanned by cubic curves $E \subset S$ such that $E^2=0$, $E.L=3$. 

Observe that, by \cite[Prop. 2.1]{glm2}, if $\phi(L) \geq 4$ then $C_P$ has no trisecant lines, while if $\phi(L) = 3$ then any trisecant line to $C_P$ belongs to some 2-plane spanned by a cubic curve $E \subset S$ such that $E^2=0$, $E.L=3$. Therefore to see (ii) one easily sees that  it is enough to prove that, if $P \in S$, then $C_P$ is scheme-theoretically cut out by quadrics at $P$ (see, for example, the proof of \cite[Thm. 1.3]{glm2}). Also, as $S$ is linearly normal, any quadric containing $C_P$ lifts to a quadric containing $S$, whence, to see (iii), it is enough to prove that the intersection of the quadrics containing $C_P$ is $C_P$ union all its trisecant lines. 

Now, to see (ii), by \cite[Prop. 2.4.2]{la} we see that $C_P$ is scheme-theoretically cut out by quadrics as soon as $\Cliff(C_P) \geq 4$, while, to see (iii), by \cite[Thm. 1.3]{ls}, we see that the intersection of the quadrics containing $C_P$ is $C_P$ union all its trisecant lines as soon as $\Cliff(C_P) \geq 4$. 
Setting $k_0 = \gon(C_P)$, by \cite[Thm. 1.1]{kl3}, we therefore need to show, in both cases, that $k_0 \geq 6$.

Suppose now that $\phi(L) \geq 4$. By \cite[Cor. 1.6 and Cor. 1.5]{kl1}, we are done except possibly when $L^2 \leq 22$ and $\phi(L) = 4$. If $L^2 = 20$ or $22$, by \cite[Cor. 1.6 and Cor. 1.5]{kl1}, we have $k = 7$, $k_0 \geq 5$ and if equality holds, by Proposition \ref{more}(i)-(ii), there exists a divisor $D$ on $S$ with $D^2 = 4, D.L=9$. The case $L^2 = 22$ contradicts the Hodge index theorem, while if $L^2 = 20$ we get $(L - 2D)^2 = 0$ and the contradiction $4 = \phi(L) \leq L.(L-2D) = 2$. If $L^2 = 18$, by \cite[Cor. 1.6 and Cor. 1.5]{kl1}, we have $k = 6$, $k_0 \geq 4$ and if equality holds, by Proposition \ref{more}(i)-(ii), there exists a divisor $D$ on $S$ with $D^2 = 4, D.L = 8$, contradicting the Hodge index theorem. Hence, to prove (ii), we are left with the cases $(L^2, \phi(L), k_0) = (18, 4, 5), (16, 4, 5)$ and $(16, 4, 4)$.

Now suppose that $\phi(L) = 3$ so that $k = 6$, $k_0 \geq 4$ by  \cite[Cor. 1.6 and Cor. 1.5]{kl1} and assume that $k_0 = 4, 5$ with $(k_0, L^2) \neq (5, 18)$. By Proposition \ref{more} there exists a divisor $D$ on $S$ with either $D^2 = 2, D.L = 6$ or $D^2 = 4, D.L = 8$ when $k_0 = 4$ and $D^2 = 2, D.L\leq 7$ or $D^2 = 4, D.L\leq 9$ when $k_0 = 5$.  Then the Hodge index implies that either $k_0 = 4$ and $L^2= 18$ or $k_0 = 5$ and $20 \leq L^2 \leq 24$. By \cite[Prop. 3.1]{kl1}(a) we have a decomposition $L \sim N + N'$ with $N'$ base-component free, $2 (N')^2 \leq L.N' \leq (N')^2  + k_0 \leq 2k_0$ and if $\phi(N') = 1$ and $L.N' \geq (N')^2 + k_0 - 1$ then $\Bs |N'| \cap C_P \not= \emptyset$. If $(N')^2 = 0$ then $N' \sim mB$ for some $m \geq 1$ and a genus one pencil $B$, giving the contradiction $k_0 \geq L.N' \geq 2 \phi(L) = 6$. If $(N')^2 = 2$  then $L.N' \geq 2 \phi(L) = 6 \geq k_0 + 1 = (N')^2 + k_0 - 1$, whence $\Bs |N'| \cap C_P \not= \emptyset$, a contradiction because $C_P$ is cut out by a general hyperplane passing through $P \not\in S$. Therefore $(N')^2 = 4$ and the Hodge index theorem applied to $N'$ and $L$ shows that we are in the case $k_0 = 5$, $L^2= 20$ and $L.N' = 9$. But this gives $N.N'=5$, $N^2 = 6$, $L.N = 11$, whence $(N-N')^2 = 0$ and the contradiction $3 = \phi(L) \leq L.(N-N') = 2$. Hence, to prove (iii), we are left with the case $(L^2, \phi(L), k_0) = (18, 3, 5)$.

Note that if $(L^2, \phi(L)) = (18, 4)$, by \cite[Prop. 1.4 and Lemma 2.14]{kl1}, there exist, $E>0, E_i > 0$ for $i = 1, 2$, such that $E^2 = E_i^2 = 0$, $E.E_1 = E.E_2 = 2$, $E_1.E_2 = 1$ and $L \sim 2 E + E_1 + E_2$. Moreover we claim that there exists a divisor $F>0$ be such that $F^2 = 0, F.L=5$ and $P \not \in \Supp(F)$. To see the latter, since $E_1.L = E_2.L = 5$, we can choose as $F$ one among $E_1$, $E_1+K_S$, $E_2$, $E_2+K_S$ and we just need to show that their intersection is empty. This is certainly true and well-known if either $E_1$ or $E_2$ are nef. If not, by \cite[Lemma 2.3]{kl1}, for $i = 1, 2$, there are nodal curves $R_i$ such that, setting $l_i = - R_i.E_i \geq 1$, there exists an $A_i > 0$ with $A_i^2 = 0$ and $E_i \sim A_i + l_i R_i$. Now $5 = E_i.L =  A_i.L + l_i R_i.L \geq 5$ shows that $A_i.L = 4$, $l_i = 1$ and $A_i.R_i= 1$. Again by \cite[Lemma 2.3]{kl1}, $A_i$ is nef for $i = 1, 2$. Then $\Supp(A_i) \cap \Supp(A_i + K_S) = \emptyset$ and it remains to show that $R_1.R_2=0$. It cannot be $E.A_i = 0$, for then, by \cite[Lemma 2.1]{klvan}, $E \eqv A_i$ giving the contradiction $2 = E.E_1 = E.R_i = A_i.R_i= 1$. From $4 = L.A_i = 2E.A_i  + E_1.A_i +  E_2.A_i \geq  4$ we deduce that $E.A_i=E_j.A_i=1$ for all $i, j$ and $R_1.R_2 = (E_1-A_1).(E_2-A_2) = -1 + A_1.A_2$. Therefore it cannot be $A_1.A_2=0$, for then $R_1.R_2 = -1$, whence $1 = A_1.E_2 = A_1.A_2 + A_1.R_2 \geq 1$ gives $A_1.A_2=1$ and then  $R_1.R_2=0$.

We now treat together the two cases $(L^2, \phi(L), k_0) = (18, 4, 5)$ or $L$ is as in Lemma \ref{18} (i) and $k_0=5$. Let $A$ be a $g^1_5$ on $C_P$ and let us apply \cite[Prop. 3.1]{kl1}.  

In case (a) of that Proposition, we have a decomposition $L \sim N + N'$ with $N'$ base-component free, $2 (N')^2 \leq L.N' \leq (N')^2  + 5 \leq 10$, $N'_{|C_P} \geq A$, either $N \geq N'$ or $N$ is base-component free and $\Bs |N| \subset C_P$. Moreover if $\phi(N') = 1$ and $L.N' \geq (N')^2 + 4$, then $\Bs |N'| \cap C_P \neq \emptyset$. Now it cannot be $(N')^2 = 0$ for then, as usual, we get the contradiction $5 \geq L.N' \geq 2 \phi(L) \geq 6$. Also we cannot have $(N')^2 = 2$ for then if $\phi(L) =4$ we get the contradiction $7 \geq L.N' \geq 2 \phi(L) = 8$, while if $\phi(L) = 3$, since $L.N' \geq 2 \phi(L) = 6 \geq (N')^2 + 4$, we get the contradiction $\Bs |N'| \cap C_P \neq \emptyset$. Therefore $(N')^2 = 4$ and the Hodge index theorem gives $N'.L=9$, $N.N' = 5$ and $N^2 = 4$. Now $L.(N-N')=0$ and $(N-N')^2 = -2$ show that $H^0(N-N')=H^1(N-N')=0$. Then $N$ is base-component free, whence $H^1(N)=H^1(N+K_S)=0$.

Let $Z \in |A|$ and let $Z' \in |N'_{|C_P} - Z|$. The exact sequence
\begin{equation}
\label{si} 
0 \hpil -N \hpil N' \hpil (N')_{|C_P} \hpil 0 
\end{equation}
shows that there exists $D' \in |N'|$ such that $D'_{|C_P} = Z + Z'$. Now the exact sequence
\begin{equation}
\label{eq} 
0 \hpil -N \hpil \I_{Z'/S}(N') \hpil A \hpil 0 
\end{equation}
gives $h^0(\I_{Z'/S}(N')) = 2$. Let 
\begin{equation}
\label{v} 
|H^0(\I_{Z'/S}(N'))| = |V| + G
\end{equation}
be the decomposition into moving and fixed part, so that $|V|$ is base-component free and $G \geq 0$. Let $M = N' - G$ and pick a general divisor $D_0 \in |H^0(\I_{Z'/S}(N'))|$, so that there exist $M', M_0 \in |V|$ such that $D' = M' + G$,  $D_0 = M_0 + G$ and $M_0$ is general in $|V|$. In particular $M'$ and $M_0$ have no common components. Now 
\begin{equation}
\label{z'} 
Z' \sub D' \cap D_0 = (M' \cap M_0) \cup G \ \mbox{and} \ \length(Z') = N'.L - \deg Z = (N')^2.
\end{equation}
If $G > 0$, as $Z' \subset C_P$, \eqref{z'} gives $L.G \geq 4 - M^2$. From $9 = L.N' = L.M + L.G \geq L.M + 1$ we get $L.M \leq 8$ and the Hodge index theorem implies that $M^2 \leq 2$. Therefore $L.G \geq 2$ and, as $M$ is base-component free and nontrivial, we have $L.M \geq 2\phi(L)$. When $\phi(L) = 4$ we get the contradiction $9 = L.M + L.G \geq 10$. When $\phi(L) = 3$ we have $L.M \geq 6$, whence, as $L$ is 3-divisible, $L.G = 3, M^2=2$ and $L.M=6$. The Hodge index theorem gives $L \eqv 3M$, whence $M.G=1$. Now $4 = (N')^2 = M^2 + 2M.G + G^2$ gives $G^2=0$ and $G.N' = 1$, so that we get the contradiction $\Bs |N'| \cap C_P \neq \emptyset$. 

Hence $G = 0$ and $\length(Z') = (N')^2 = 4 = M^2$ and therefore $Z' = D' \cap D_0$, whence $Z'$ is a Cartier divisor on $D'$. Now the exact sequence
\begin{equation}
\label{n'} 
0 \hpil \O_S \hpil \I_{Z'/S}(N') \hpil \I_{Z'/D'}(N') \hpil 0 
\end{equation}
shows that $h^0(\I_{Z'/D'}(N')) = 1$. But  $\deg N'_{|D'}(-Z') = 0$, whence, on $D'$ we have $N'_{|D'} \sim Z'$ and therefore $Z \sim L_{|D'} - Z' \sim N_{|D'} + (N')_{|D'} - Z' \sim N_{|D'}$.

We have therefore proved that there exists a decomposition $L \sim N + N'$ with $N'$ base-component free, $N.N'=5$, $H^1(N)=$ $H^1(N+K_S)=H^0(N-N')=H^1(N-N')=0$, $(N')^2 = N^2 = 4$, a divisor $D' \in |N'|$ and a divisor $Z \in |N_{|D'}|$ such that $Z \subset C_P$. Let $X$ be the family of such pairs $(D', Z)$ and consider the incidence subvariety of $|L| \times X$:
\begin{equation*}
\label{jm}
\J = \{ (C,(D', Z)) \; : \; C \in |L|, (D', Z) \in X, Z \subset C \}
\end{equation*}
together with its two projections 
\[  
\xymatrix{& \J \ \ \ar[dl]_{\pi_1}
\ar[dr]^{\pi_2} \ar@{^{(}->}[r] & |L| \times X \\
|L| & & X.}
\]
Of course we have proved that $C_P \in \Im \pi_1$. One easily checks that $\dim X = \dim |N'| + \dim |N_{|D'}| = 4$ and the dimension of a general fiber of $\pi_2$ is at most $h^0(\I_{Z/S}(L)) - 1$. The exact sequence
\begin{equation} 
\label{enne}
0 \hpil N \hpil \I_{Z/S}(L) \hpil \I_{Z/D'}(L) \hpil 0 
\end{equation}
now shows that $h^0(\I_{Z/S}(L)) = h^0(N) + h^0((N')_{|D'}) = 5$, so that $\dim \J \leq 8$. On the other hand let $C \in \Im \pi_1$ be general, so that there exist $D' \in |N'|$ and $Z \in |N_{|D'}|$ such that $Z \subset C$. Therefore $Z \subset D' \cap C$ and we can find an effective divisor $Z'$ such that $Z' \sim D'_{|C} - Z$ on $C$. Note that $Z$ is Cartier on $D'$, whence, on $D'$, we find $Z' \sim L_{|D'} - Z \sim N'_{|D'}$. Now \eqref{n'} gives $h^0(\I_{Z'/S}(N')) = 1 + h^0(\O_{D'}) \geq 2$ and from the exact sequence
\begin{equation*} 
0 \hpil -N \hpil \I_{Z'/S}(N') \hpil  \I_{Z'/C}(N') \hpil 0 
\end{equation*}
we get $H^0(\I_{Z'/S}(N')) \cong  H^0(\I_{Z'/C}(N'))$ and $h^0(\I_{Z'/C}(N')) \geq 2$. Pick a general divisor $Z'' \in |\I_{Z'/C}(N')|$, so that there is a divisor $D'' \in |N'|$ such that $Z' \subset D''$, $D'' \cap C = Z'' + Z'$ and $D''$ is general in $|H^0(\I_{Z'/S}(N'))|$. Exactly as we did above we deduce that $Z' = D' \cap D''$, whence $Z'$ is a Cartier divisor on $D''$. From the exact sequence
\begin{equation*} 
0 \hpil \O_S \hpil \I_{Z'/S}(N') \hpil  \I_{Z'/D''}(N') \hpil 0 
\end{equation*}
we get $h^0( \I_{Z'/D''}(N')) \geq 1$. Since $\deg N'_{|D''} = \deg Z'$ we find $Z' \sim N'_{|D''}$ on $D''$ and therefore $Z'' \sim N_{|D''}$. This shows that the at least one-dimensional family of pairs $(Z'', D'')$ belong to $X$ and therefore any fiber of $\pi_1$ has dimension at least one, so that $\dim \Im \pi_1 \leq 7$. As $\dim |L| = 9$ and the possible decompositions $L \sim N + N'$ as above are finitely many, we have shown that this case cannot occur.

Therefore we must be in case (b) of \cite[Prop. 3.1]{kl1}. Let $F>0$ be such that $F^2 = 0, F.L=5$ and $P \not \in \Supp(F)$ in case $\phi(L) = 4$ or $F = F_1$ or $E$ in case $\phi(L) = 3$. Since $h^1(A) = 6 > F.L$, we find as usual that $h^0(\E(C_P,A)(-F))>0$, whence we have a decomposition $L \sim N + N'$ with $N \geq F$, $N' = N + R + K_S$ base-component free, $R$ a nodal cycle, $N.N' = 5$, $N'_{|C_P} \geq A$ and $\Bs |N'| \subset C_P$. It cannot be $(N')^2 = 0$ for then $5 = N.N' +  (N')^2 = L.N' \geq 2 \phi(L) \geq 6$. Also we cannot have $\phi(N') = 1$ (and, in particular, $(N')^2 = 2$) for then we have the contradiction $\emptyset \not= \Bs |N'| \subset C_P$. Hence $N'$ is base-point free and $(N')^2 \geq 4$. From $N^2 + 5 = L.N \geq L.F$ we see that $N^2 \geq 0$ when $\phi(L) = 4$ and $N^2 \geq -2$ when $\phi(L) = 3$. But if $N^2 = -2$ we get $L.N = 3 \geq L.F \geq 3$ so that $N=F$ giving the contradiction $F^2 = -2$. Therefore $N^2 \geq 0$ and from $18 = L^2 = N^2 + (N')^2 + 10$ we deduce the three possibilities $(N^2, (N')^2) = (4, 4), (2, 6)$ or $(0, 8)$. In the first case we get the contradiction $L.R = L.N'-L.N = 0$. If $(N^2, (N')^2) = (2, 6)$ we have $L.N = 7$ which is not divisible by 3, whence $\phi(L) = 4$, giving the contradiction $7 = L.N \geq 2\phi(L) = 8$. If $(N^2, (N')^2) = (0, 8)$  we have $L.N = 5$, whence again $\phi(L) = 4$ and $5 = L.N \geq L.F = 5$, so that $N = F$. 

Hence the proof in the case $\phi(L) = 3$ and $L$ as in Lemma \ref{18}(i) is completed. 

When $\phi(L) = 4$ we note that $H^1(N)=H^1(N+K_S)=0$ since $L.N = 5 < 2\phi(L)$ (see for example \cite[Lemma 2.5]{klm}). We also record that, since $N-N'=-R+K_S$, we have $H^0(N-N')=0$ and $h^1(N-N')=1$.

Let $Z \in |A|$ and let $Z' \in |N'_{|C_P} - Z|$. As above there exists $D' \in |N'|$ such that $D'_{|C_P} = Z + Z'$ and again \eqref{eq} gives $h^0(\I_{Z'/S}(N')) = 2$. With the same notation as in \eqref{v} we now deduce by \eqref{z'} that, if $G > 0$, then $L.G \geq 8 - M^2$. From $13 = L.N' = L.M + L.G \geq L.M + 1$ we get $L.M \leq 12$ and the Hodge index theorem implies that $M^2 \leq 8$. If equality holds we get $2L \eqv 3M$ and the contradiction $10 = 2L.E_1 = 3M.E_1$. If  $M^2= 6$ then $L.G \geq 2$ and $L.M \geq 3 \phi(L) = 12$, giving the contradiction $L.M + L.G \geq 14$. If  $M^2 = 2, 0$ then $L.G \geq 6, 8$ and $L.M \geq 2 \phi(L) = 8$, giving the same contradiction. If $M^2= 4$ then $L.G \geq 4, L.M = 13 - L.G \leq 9$ and the Hodge index theorem gives $L.M = 9$ and $L.G = 4$. We also prove, for later, that $H^1(L-M)=H^0(L-2M)=H^1(L-2M)=0$. Suppose first that $h^1(L-M) > 0$. Then, by Riemann-Roch, as $(L-M)^2 = 4$, we get $h^0(L-M) \geq 4$ and we can write $|L-M| = |B| + \Gamma$ with $|B|$ base-component free, $\Gamma \geq 0$ the fixed component and  $h^0(B) \geq 4$. Also $L.B \leq L.(L-M) = 9$. But then either $B^2 = 0$ and $B \sim hQ$ for a genus one pencil $Q$ and $h \geq 3$, giving the contradiction $9 \geq L.B \geq 6 \phi(L) = 24$ or $B^2 > 0$ so that $H^1(B) = 0$ and Riemann-Roch gives $B^2 \geq 6$, contradicting the Hodge index theorem. This proves that  $H^1(L-M)=0$. Now $(L-2M)^2 = -2$ and $L.(L-2M) = 0$ easily gives $H^0(L-2M)=H^1(L-2M)=0$.

Suppose first that $G = 0$.

By \eqref{z'} we have $Z' = D' \cap D_0$, whence $Z'$ is a Cartier divisor on $D'$. Now \eqref{n'} gives
$h^0(\I_{Z'/D'}(N')) = 1$. But  $\deg N'_{|D'}(-Z') = 0$, whence, on $D'$ we have $N'_{|D'} \sim Z'$ and therefore $Z \sim L_{|D'} - Z' \sim N_{|D'} + (N')_{|D'} - Z' \sim N_{|D'}$.

We have therefore proved that there exists a decomposition $L \sim N + N'$ with $N'$ base-point free, $N.N'=5$,  $(N^2, (N')^2) = (0, 8)$, $H^1(N)=H^1(N+K_S)=0$, $H^0(N-N')=0$, $h^1(N-N')=1$, a divisor $D' \in |N'|$ and a divisor $Z \in |N_{|D'}|$ such that $Z \subset C_P$. Let $X$ be the family of such pairs $(D', Z)$ and consider the incidence subvariety of $|V_P| \times X$:
\begin{equation}
\label{jm1}
\J = \{ (C,(D', Z)) \; : \; C \in |V_P|, (D', Z) \in X, Z \subset C \}
\end{equation}
together with its two projections 
\[  
\xymatrix{& \J \ \ \ar[dl]_{\pi_1}
\ar[dr]^{\pi_2} \ar@{^{(}->}[r] & |V_P| \times X \\ |V_P| & & X.}
\]
Of course, as $C_P \in \Im \pi_1$, we have proved that $\pi_1$ is surjective. One easily checks that $X$ is irreducible and $\dim X = \dim |N'| + \dim |N_{|D'}| = 5$. As $Z$ moves on $C_P$, we see that a general element $(D', Z) \in X$ is such that $P \not\in Z$, and the dimension of a general fiber of $\pi_2$ is at most $\dim \{T \in |V_P| : Z \subset T \} = h^0(\I_{Z \cup \{P\}/S}(L)) - 1$. Since $P \not \in \Supp(F)$, then either $P \not \in \Supp(D')$ and we find an effective divisor $D' + F \sim L$ such that $P \not \in \Supp(D' + F)$ and $Z \subset \Supp(D' + F)$ or $P \in \Supp(D')$ and the natural map $H^0(\I_{Z/D'}(L)) \to H^0(L_{|\{P\}})$ is surjective since $\I_{Z/D'}(L)\cong N'_{|D'}$ is base-point free and from \eqref{enne} we get
that $H^0(\I_{Z/S}(L))\to H^0(\I_{Z/D'}(L))$ is surjective. Therefore $h^0(\I_{Z \cup \{P\}/S}(L)) - 1= h^0(\I_{Z/S}(L)) - 2$ and \eqref{enne}
gives $h^0(\I_{Z/S}(L)) =  h^0(N) + h^0(N'_{|D'}) =  5$, so that the dimension of a general fiber of $\pi_2$ is at most 3 and $\dim \J \leq 8$. On the other hand, exactly as in case (a) above, any fiber of $\pi_1$ has dimension at least one, so that $\dim \Im \pi_1 \leq 7$, a contradiction.

Suppose now that $G > 0$, so that, as we have seen above, $M^2= 4, L.M = 9$ and $L.G = 4$. Recall that $Z' \subset D' \cap C_P$ with $D' = M' + G$. Let $Z_G = G \cap Z' \sub G \cap C_P$ be the scheme-theoretic intersection and let $Z_M = Z' - Z_G$, seen as effective divisors on $C_P$. Then $Z_M \cap G = \emptyset$ and \eqref{z'} shows that $Z_M \sub M' \cap M_0$. On the other hand $M^2 \geq \length(Z_M) =  \length(Z') -  \length(Z_G)  \geq \length(Z') - L.G = N'.L - 9 = 4 = M^2$ and therefore $Z_M = M' \cap M_0$, $Z_G = G \cap C_P$ and, in particular, $Z_M$ is a Cartier divisor on $M'$. Moreover on $C_P$ we have $Z + Z_M + Z_G = Z + Z' = D' \cap C_P = M' \cap C_P + G \cap C_P$, so that $Z + Z_M = M' \cap C_P$. Also note that, on $M'$, we have $Z \sim L_{|M'} - Z_M \sim (L-M)_{|M'}$.

We have therefore proved that there exists a decomposition $L \sim M + G + N$ with $N>0$, $M$ base-component free,  $M^2=4, N^2 = 0$, $H^1(L-M)=H^0(L-2M)=H^1(L-2M)=0$, a divisor $M' \in |M|$ and a divisor $Z \in |(L-M)_{|M'}|$ such that $Z \subset C_P$. Let $X$ be the family of such pairs $(M', Z)$ and consider the incidence subvariety of $|V_P| \times X$:
\begin{equation*}
\J = \{ (C,(M', Z)) \; : \; C \in |V_P|, (M', Z) \in X, Z \subset C \}
\end{equation*}
together with its two projections 
\[  
\xymatrix{& \J \ \ \ar[dl]_{\pi_1}
\ar[dr]^{\pi_2} \ar@{^{(}->}[r] & |V_P| \times X \\ |V_P| & & X.}
\]
Of course, as $C_P \in \Im \pi_1$, we have proved that $\pi_1$ is surjective. One easily checks that $X$ is irreducible and $\dim X = \dim |M| + \dim |(L-M)_{|M'}| = 4$.  Now, exactly as before, the dimension of a general fiber of $\pi_2$ is at most $h^0(\I_{Z \cup \{P\}/S}(L)) - 1 = h^0(\I_{Z/S}(L)) - 2 =  3$, so that $\dim \J \leq 7$ and $\dim \Im \pi_1 \leq 7$, a contradiction.

We now deal with the case $(L^2, \phi(L), k_0) = (18, 3, 5)$ and $L$ is as in Lemma \ref{18} (ii) and not of special type (see Def. \ref{18sp}). Let $A$ be a $g^1_5$ on $C_P$ and let $\E:=\E(C,A)$ be the vector bundle as defined in \cite{kl1}. Consider the decomposition $L+K_S \sim D_1+D_2$ with $D_1= E+E_1$ and $D_2 = E+E_2+E_3+K_S$. Since $D_1^2+D_2^2 =8$, we get by \cite[Lemma 3.2]{kl1}, that $h^0(\E(-D_0))>0$ with $D_0=D_1$ or $D_0=D_2$ and $(L-D_0)_{|C_P} \geq A$. Saturating we get an exact sequence
\begin{equation}
\label{ecco}
0 \hpil N \hpil \E \hpil N' \* \I_X \hpil 0
\end{equation}
with $N, N' \in \Pic S$, $X$ a zero-dimensional scheme, $L \sim N + N'$ and $N \geq D_0$. Moreover $N'$ is base component free and nontrivial, $\Bs|N'| \subset C_P \cup X$ and $(N')_{|C_P} \geq A$. 

Since $N' \leq L-D_0$ and $h^1(L-D_0)=0$ we get $(N')^2 \leq (L-D_0)^2 \leq 6$. Also, taking $c_2$ in \eqref{ecco}, we find that $N.N' \leq N.N' + \length(X) =5$.

Now if $(N')^2 =0$ we get the contradiction $6 = 2 \phi(L) \leq L.N' = N.N' \leq 5$. If $(N')^2 =2$ then $|N'|$ has two base points, whence, as $C_P$ is general, $\Bs|N'| \cap C_P = \emptyset$ and therefore $\length(X) \geq 2$ and $N.N' \leq 3$, giving the  contradiction $6 = 2 \phi(L) \leq L.N' = 2 + N.N' \leq 5$.
If $(N')^2 =4$ we have $L.N' = 4 + N.N' \leq 9$ and then $L.N'=9$ by the Hodge index theorem. Therefore $N.N'=5$ and $X = \emptyset$. As above it cannot be $\phi(N') = 1$ for then $|N'|$ has two base points which must lie on $C_P$ and therefore $\phi(N') \geq 2$ giving the contradiction $9 = L.N' \geq 5 \phi(N') \geq 10$.
Hence $(N')^2 =6$ and then $D_0 =  E+E_2+E_3+K_S$, so that $N' \leq E+E_2+E_3$. But $11 = L.(E+E_2+E_3) \geq L.N' \geq 11$ by the Hodeg index theorem and therefore $N' = E+E_2+E_3$ and $N = E + E_1$. Let $Z \in |A|$ and let $Z' \in |N'_{|C_P} - Z|$. As above, \eqref{si} shows that there exists $D' \in |N'|$ such that $D'_{|C_P} = Z + Z'$ and \eqref{eq} gives $h^0(\I_{Z'/S}(N')) = 2$. With notation as in \eqref{v} we get $N' = M + G$ with $|M|$ base-component free nontrivial and $G \geq 0$. We now claim that $G = 0$. If $G > 0$ we get by \eqref{z'} that $L.G \geq 6 - M^2$. From $11 = L.N' = L.M + L.G$ we get $L.M \leq 10$ and the Hodge index theorem implies that $M^2 \leq 4$. If $M^2=0$ we have $L.G \geq 6$ and the contradiction $5 \geq L.M \geq 2\phi(L) = 6$. If $M^2 = 2$ we can write $M = F_1 + F_2$ with $F_i > 0, F_i^2=0$ and $F_1.F_2=1$. Now $L.G \geq 4$, whence $7 \geq L.M = L.F_1 + L.F_2.$ Since $E$ is (numerically) the only $F>0$ such that $F^2=0, L.F=3$ and the $E_j$'s are (numerically) the only $F>0$ such that $F^2=0, L.F=4$, we deduce that $F_1 \eqv E, F_2 \eqv E_j$ for some $j$ such that $1 \leq j \leq 3$. Then $M \eqv E+E_j$ and $G \eqv E_2 + E_3-E_j$. It cannot be $j = 1$, for this case is excluded by the hypothesis that $L$ not of special type, whence we can assume that $j=2$, $M \eqv E+E_2$ and $G \eqv E_3$.  Recall that $Z' \subset D' \cap C_P$ with $D' = M' + G$. Let $Z_G = G \cap Z' \sub G \cap C_P$ be the scheme-theoretic intersection and let $Z_M = Z' - Z_G$, seen as effective divisors on $C_P$. Then $Z_M \cap G = \emptyset$ and \eqref{z'} shows that $Z_M \sub M' \cap M_0$. On the other hand $M^2 \geq \length(Z_M) =  \length(Z') -  \length(Z_G)  \geq \length(Z') - L.G = 2 = M^2$ and therefore $Z_M = M' \cap M_0$, $Z_G = G \cap C_P$ and, in particular, $Z_M$ is a Cartier divisor on $M'$. Moreover on $C_P$ we have $Z + Z_M + Z_G = Z + Z' = D' \cap C_P = M' \cap C_P + G \cap C_P$, so that $Z + Z_M = M' \cap C_P$. But now $M_{|C_P} \sim A + Z_M$ and $h^0(M_{|C_P}) = h^0(M) = 2$ and then $Z_M = \Bs|M| \subset C_P$, a contradiction.
Therefore $M^2 = 4, L.G \geq 2$ and $L.M \leq 9$. Now either $\phi(M) = 1$, but then we can write $M = 2F_1 + F_2$ with $F_i > 0, F_i^2=0, F_1.F_2=1$ and we get the contradiction $9 \geq L.M = 2L.F_1 + L.F_2 \geq 10$ or  $\phi(M) \geq 2$, but then $M.N \geq 2\phi(M) \geq 4$, $M.N' \geq 3\phi(M) \geq 6$ so that $M.G \geq 2$, giving the contradiction $9 \geq L.M = M^2 + M.G + M.N \geq 10$. This gives that $G = 0$.

We have therefore proved that there exists a decomposition $L \sim N + N'$ with $N'$ base-component free, $(N')^2=6, N^2=2$, $H^1(N)=0$ and $H^1(N-N') = H^1(E_2 + E_3+K_S - E_1) = 0$, a divisor $D' \in |N'|$ and a divisor $Z \in |N_{|D'}|$ such that $Z \subset C_P$. Let $X$ be the family of such pairs $(D', Z)$ and consider the incidence subvariety in \eqref{jm1} together with its two projections. Of course we have proved that $C_P \in \Im \pi_1$, that is $\pi_1$ is surjective. One easily checks that $\dim X = \dim |N'| + \dim |N_{|D'}| =4$ and, using \eqref{enne}, the dimension of a general fiber of $\pi_2$ is at most $4$. On the other hand, exactly as above, any fiber of $\pi_1$ has dimension at least one, so that $\dim \Im \pi_1 \leq 7$, a contradiction.

This concludes the proof for the case $L^2 = 18$.

We now treat the cases $L^2=16, \phi(L)=4$ and $k_0 = 4, 5$. Let $A$ be a $g^1_{k_0}$ on $C_P$.

If $k_0 = 4$ we are in case (a) of \cite[Prop. 3.1]{kl1}, so that there is a decomposition $L \sim N + N'$ with $N'$ base-component free, $2 (N')^2 \leq L.N' \leq (N')^2  + 4 \leq 8$ and $N'_{|C_P} \geq A$. Moreover if $\phi(N') = 1$ and $L.N' \geq (N')^2 + 3$, then $\Bs |N'| \cap C_P \neq \emptyset$. Now it cannot be $(N')^2 = 0$ for this gives the contradiction $4 \geq L.N' \geq 2 \phi(L) \geq 8$. Also we cannot have $(N')^2 = 2$ for then $6 \geq L.N' \geq 2 \phi(L) \geq 8$. Therefore $(N')^2 = 4$ and the Hodge index theorem gives $L \eqv 2N', N \eqv N'$ and $H^1(N)=H^1(N+K_S)=0$.

Let $Z \in |A|$ and let $Z' \in |N'_{|C_P} - Z|$. As above, \eqref{si} shows that there exists $D' \in |N'|$ such that $D'_{|C_P} = Z + Z'$ and \eqref{eq}
gives $h^0(\I_{Z'/S}(N')) = 2$. With notation as in \eqref{v} we get $N' = M + G$ with $|M|$ base-component free nontrivial and $G \geq 0$. From $8 = L.N' = L.M + L.G \geq 2\phi(L)+ L.G \geq 8$ we get that $G = 0$ and \eqref{z'} gives $\length(Z') = (N')^2 = 4 = M^2$. Therefore $Z' = D' \cap D_0$, whence $Z'$ is a Cartier divisor on $D'$. Now \eqref{n'} gives $h^0(\I_{Z'/D'}(N')) = 1$. But  $\deg N'_{|D'}(-Z') = 0$, whence, on $D'$ we have $N'_{|D'} \sim Z'$ and therefore $Z \sim N_{|D'}$.

We have therefore proved that there exists a decomposition $L \sim N + N'$ with $N'$ base-component free, $N \eqv N'$, $H^1(N)=$ $H^1(N+K_S)=0$, $(N')^2 = 4$, a divisor $D' \in |N'|$ and a divisor $Z \in |N_{|D'}|$ such that $Z \subset C_P$. Let $X$ be the family of such pairs $(D', Z)$ and consider the incidence subvariety in \eqref{jm1} together with its two projections. Of course we have proved that $C_P \in \Im \pi_1$, that is $\pi_1$ is surjective. One easily checks that $\dim X = \dim |N'| + \dim |N_{|D'}| \leq 4$ and the dimension of a general fiber of $\pi_2$ is at most $h^0(\I_{Z \cup \{P\}/S}(L)) - 1$. Now either $P \not \in \Supp(D')$ and, as $N$ is base-point free, we find an effective divisor $D' + N_1 \sim L$, $N_1 \in |N|$, such that $P \not \in \Supp(D' + N_1)$ and $Z \subset \Supp(D' + N_1)$ or $P \in \Supp(D')$ and the natural map $H^0(\I_{Z/D'}(L)) \to H^0(L_{|\{P\}})$ is surjective since $\I_{Z/D'}(L)\cong N'_{|D'}$ is base-point free and from \eqref{enne} we get that $H^0(\I_{Z/S}(L))\to H^0(\I_{Z/D'}(L))$ is surjective. Therefore $h^0(\I_{Z \cup \{P\}/S}(L)) = h^0(\I_{Z/S}(L)) - 1$ and \eqref{enne} gives $h^0(\I_{Z/S}(L)) = h^0(N) + h^0(N'_{|D'}) =  5$, so that the dimension of a general fiber of $\pi_2$ is at most 3 and $\dim \J \leq 7$.  On the other hand, exactly as above, any fiber of $\pi_1$ has dimension at least one, so that $\dim \Im \pi_1 \leq 6$, a contradiction.

This proves that the case $L^2=16, \phi(L)= 4$ and $k_0 = 4$ does not occur. 

Assume now $L^2=16, \phi(L)=4$ and $k_0 = 5$. Recall that, by \cite[Prop. 1.4]{kl1}, there exist $E>0, E_1> 0$ such that $E^2 = E_1^2 = 0$, $E.E_1 = 2$ and $L \eqv 2 (E + E_1)$.  Set $D_1 = E + E_1$ and $D_2 = L+K_S - D_1$, so that $h^1(D_1+K_S)=0$ and \cite[Lemma 3.2]{kl1} gives a divisor $D_0  \eqv E + E_1$ such that $h^0(\E(-D_0))>0$, where $\E=\E(C_P,A)$.

As in the proof of \cite[Lemma 2.4]{kl3} we have an exact sequence
\begin{equation*}
0 \hpil N \hpil \E \hpil N' \* \I_W \hpil 0  
\end{equation*}
with $N, N' \in \Pic S$, $W$ a zero-dimensional scheme, $N \geq D_0$ and $L \sim N + N'$. Moreover $N'$ is base-component free and nontrivial, $N'_{|C_P} \geq A$ and  $N.N' \leq N.N' + \length(W) =5$.

Since $N' \leq D_0$ and both are nef, we get $(N')^2 \leq D_0^2 = 4$. If $(N')^2 =0, 2$ we get the contradiction $8 =  2 \phi(L) \leq L.N' = (N')^2 + N.N' \leq 7$. Therefore $(N')^2 = 4$ and $L.N' = (N')^2 + N.N' \leq 9$, whence, as $L$ is 2-divisible, $L.N'  \leq 8$. Now the Hodge index theorem gives $L \eqv 2N'$ and this case can be excluded exactly as the previous one.

This concludes the proof for the case $L^2 = 16, \phi(L)=4$.

\end{proof}
\renewcommand{\proofname}{Proof}

\begin{rem}
\label{rm}

\rm{With our methods, we could also extend Theorem \ref{ideale}(iii) when $(L^2, \phi(L)) = (16, 3)$ or $(18, 3)$. Here, for the general hyperplane section $C_{\eta}$ of $S$, we have that $\gon(C_{\eta}) = 6$ and, as above, if $C_P$ is cut out on $S$ by a general hyperplane section of passing through a point $P \in \PP^r - S$, one easily excludes the case $\gon(C_P) = 4$, but we do not know if it can be $\gon(C_P) = 5$. If this can be excluded then (iii) holds for $S$ also in these cases.}
\end{rem}

\end{document}